\newcommand{\CC}{\mathbb{C}}
\newcommand{\RR}{\mathbb{R}}
\newcommand{\ints}{\int\limits}
\newcommand{\lam}{\lambda}
\newcommand{\ep}{\varepsilon}
\newcommand{\OO}{\mathcal{O}}
\newcommand{\de}{\partial}
\newcommand{\zi}{\zeta}
\newcommand{\Cam}{\mathcal{C}}
\newcommand{\lsomma}{\sum\limits_{l=0}^{M}}
\newcommand{\lsommaguid}{\sum\limits_{l=1}^{M}}
\newcommand{\vp}{\varphi}
\theoremstyle{plain}
\newtheorem{theorem}{Theorem}[section]
\newtheorem{lemma}[theorem]{Lemma}
\theoremstyle{definition}
\newtheorem{remark}[theorem]{Remark}
\newenvironment{proofTeo1}{
  \noindent{\it Proof of Theorem \ref{teo1}.}\ }{\hspace*{\fill}
  \begin{math}\Box\end{math}\medskip}
\newenvironment{proofTeo2}{
  \noindent{\it Proof of Theorem \ref{teo2}.}\ }{\hspace*{\fill}
  \begin{math}\Box\end{math}\medskip}
\begin{document}

\title{A radiation condition for the 2-D Helmholtz equation in stratified media}

\author{Giulio Ciraolo \footnote{Dipartimento di Matematica e
Applicazioni, Universit\`a di Palermo, Via Archirafi 34, 90123
Palermo, Italy, ({\tt g.ciraolo@math.unipa.it}).} }

\maketitle

\begin{abstract}
We study the 2-D Helmholtz equation in perturbed stratified media,
allowing the existence of guided waves. Our assumptions on the
perturbing and source terms are not too restrictive.

We prove two results. Firstly, we introduce a Sommerfeld-Rellich
radiation condition and prove the uniqueness of the solution for the
studied equation. Then, by careful asymptotic estimates, we prove
the existence of a bounded solution satisfying our radiation
condition.
\end{abstract}

\section{Introduction} \label{section introduction}
A classical problem in studying the Helmholtz equation
\begin{equation}\label{helm}
\Delta u + k^2 n(x,z)^2 u = f, \quad (x,z) \in \RR^2
\end{equation}
is that of finding a physically meaningful criterion for uniqueness
of solutions. When $k$ is real (and nonzero) and $n$ is a
real-valued function, the Sommerfeld radiation condition (see
\cite{So1} and \cite{So2}) and the Rellich Theorem \cite{Rel} are
the basis for such studies. Many papers have been written to extend
the Sommerfeld and Rellich radiation conditions to situation in
which the index of refraction $n$ has special properties. If the
refraction index tends to a constant $n_\infty$ in all directions
(with an appropriate behaviour), the usual uniqueness assumption is
given by the so-called outgoing Sommerfeld radiation condition
\begin{equation} \label{rad cond sommerfeld}
\lim_{R\to +\infty} R^{\frac{N}{2}} (u_R - ikn_{\infty} u) = 0,
\end{equation}
uniformly; here, $N$ is the dimension of the space and $R$ is the
radial variable. Under the same assumptions, Rellich condition is
\begin{equation} \label{rad cond Rellich}
\lim_{R\to +\infty} \ints_{\de B_R} |u_R - ikn_{\infty} u|^2 d\sigma
= 0,
\end{equation}
where $B_R$ is the ball of radius $R$ and $d\sigma$ is the surface element.

Both the conditions above say something about the geometry of the
level sets of the phase of the solution: they are circles at leading
order and their radii grow at a specific rate.

When such homogeneity condition at infinity of the refraction index
is perturbed, it is unclear which should be the right geometry. Many
papers have been written on this topic; we refer to Section 1 in
\cite{CM2} and references therein for a more detailed description of
known results. Moreover, {\it large} perturbations of some fixed
refraction index could change the rate of growing of the radii of
the level sets of the phase function (that correspond to the right
choice of the propagation constant in the radiation condition).

In this paper a step forward in those directions is given for the
Helmholtz equation
\begin{equation}\label{helm perturbata}
\Delta u + [k^2 n(x)^2 + p(x,z)] u = f, \quad (x,z) \in \RR^2,
\end{equation}
where $n$ is of the form
\begin{equation}\label{n}
  n:= \begin{cases} n_+, & x > h, \\ n_{co}(x), & |x|\leq h,\\
  n_-, & x < - h; \end{cases}
\end{equation}
here $k>0$, $n_{co}(\cdot)$ is a real-valued function of bounded
variation of the variable $x$, with $n_+, n_- , h $ positive
constants and $p$ is a perturbing term satisfying certain hypothesis
to be specified later.

Our work is motivated by the study of infinite open waveguides.Under
the \emph{weakly guiding approximation} (see \cite{SL}), and for
$p\equiv 0$, \eqref{helm perturbata} describes the electromagnetic
wave propagation in an optical or acoustical waveguide, where $k$ is
the wavenumber and $n$ is the index of refraction. The peculiarity
of the problem is the fact that the index of refraction $n$ is not a
compact or \emph{small} perturbation of the plane, and it may cause
the appearance of {\it guided modes}, i.e. waves which propagates
(each one with a different constant of propagation) in the
$z$-direction without decaying. We will call {\it radiating waves}
the waves that are not guided by the waveguide.

Our work is based on the knowledge of a Green's function $G$ for the
non-perturbed ($p\equiv 0$) Helmholtz equation
\begin{equation}\label{Helm rectilinear}
\Delta u + k^2 n(x)^2 u = f, \quad (x,z)\in\RR,
\end{equation}
with $n$ given by \eqref{n}; as done in \cite{Wi}, such a Green's
function can be found by using Titchmarsh theory \cite{Ti} on
eigenfunction expansions. We will make use of the results and
notations in \cite{MS},\cite{CM1},\cite{CM2},\cite{Ci2} (where the
case $n_+=n_-$ is deeply studied) and \cite{Ci1} and \cite{Ch} (for
the expression of the Green's function in the general case).

Due to the presence of guided modes, the usual Sommerfeld radiation
condition does not guarantee the uniqueness of solutions. The
conditions proposed in \cite{CM2}, \cite{Xu1} and \cite{Xu2} provide
the uniqueness for the Helmholtz equation in stratified media and
they consist in a collection of Sommerfeld-like conditions for all
guided components of the field and for the radiative component, each
of them having its own wavenumber. In \cite{Xu1},\cite{Xu2}, the
author studies the case of a stratified medium with compactly
supported inhomogeneities and gives a radiation condition in the
spirit of \eqref{rad cond sommerfeld}. In \cite{CM2} and \cite{Ci2}
analogous results are obtained by using an integral formulation of
the radiation condition. In the present paper, we improve the
mentioned results in the following sense: (i) we weaken the
radiation condition (we use a radiation condition which is in the
spirit of \eqref{rad cond Rellich}); (ii) we consider
inhomogeneities that can be extended to infinity in the direction of
the waveguide but have to be small in some sense (see (H2) later).

We denote by $u_0$ the radiated part of the solution,
$u_1,\ldots,u_M$ the guided ones and $\beta_l$ the propagation
constant corresponding to $u_l$, $l=0,1,\ldots,M$, (see \cite{CM2}
or Section \ref{section uniqueness} for a rigorous definition of
$u_l$ and $\beta_l$). Then, the radiation condition introduced in
\cite{CM2} (for the case $n_{cl}:=n_+=n_-$) is
\begin{equation}\label{rad cond CM 1}
\ints_0^\infty \ints_{\partial \Omega_R} \Big{|} \frac{\partial
u_0}{\partial \nu} - i k n_{cl} u_0  \Big{|}^2 d\ell \, d R +
\sum\limits_{l=1}^M \ints_0^\infty \ints_{\partial Q_R} \Big{|}
\frac{\partial u_l}{\partial \nu} - i\beta_l u_l  \Big{|}^2 d\ell \,
d R < + \infty,
\end{equation}
where $R=\sqrt{x^2+z^2}$, $\nu$ denotes the outward normal
derivative and
\begin{equation} \label{omega rho}
Q_R= \left\{ (x,z) \in \RR^2 : |x|, |z| \leq R \right\}, \ \
\Omega_R = \left\{ (x,z) \in \RR^2 : [x]_h^2 + z^2 \leq R^2
\right\},
\end{equation}
with
\begin{equation}\label{xh quadra} [x]_h=
\begin{cases}
x+h, & x<-h, \\
0, & -h \leq x \leq h, \\
x-h, & x> h.
\end{cases}
\end{equation}
In \cite{CM2} it was also noticed that also the following radiation
condition
\begin{equation}\label{rad cond CM 2}
    \lsomma \ints_0^\infty \ints_{\partial \Omega_R}
    \Big{|} \frac{\partial u_l}{\partial \nu} - i\beta_l u_l
    \Big{|}^2 d\ell \, d R < + \infty,
\end{equation}
still guarantees the existence and uniqueness of a solution for
\eqref{helm perturbata}.

Both condition \eqref{rad cond CM 1} and \eqref{rad cond CM 2} say
that the level sets of the phase of the radiating part of the
solution are given by the sets $\de \Omega_R$. An asymptotical
approximation of the sets $\de \Omega_R$ may be also used in the
radiation conditions (in particular, it may be also a ball); we
prefer to use the sets $\Omega_R$ because they lighten the analysis
the asymptotic behaviour of the Green's function (see \cite{CM2}).
Regarding guided waves, \eqref{rad cond CM 1} and \eqref{rad cond CM
2} do not seem to distinguish which is the right geometry of the
level sets, even if both of them ensure the uniqueness of the
problem. We notice that guided waves are one dimensional solutions
of the Helmholtz equation and thus the level sets of the phase
function are just straight lines in the $x$-direction.

In this paper we provide a radiation condition of Sommerfeld-Rellich
type which guarantees the uniqueness of solutions of \eqref{helm
perturbata}, with $n$ given by \eqref{n} and where $p:\RR^2 \to \CC$
is such that
\begin{enumerate}[(H1)]
\item $p(x,z)=0$ for $|x| > x_0$ for some positive $x_0$;
\item $p$ satisfies
\begin{equation}\label{p condizioni}
\sup_{(\xi,\zi) \in \RR^2} \ints_{\RR^2} |G(x,z;\xi,\zi) p(x,z)| dx
dz < 1;
\end{equation}
\end{enumerate}
here, $G$ is the Green's function for the unperturbed stratified
medium mentioned above (see Section \ref{section preliminaries} for
more details). In particular, we are assuming that the perturbation
is small in some sense and has compact support in the direction
transversal to the waveguide. Our first result is the following:

\begin{theorem} \label{teo1}
Let $p$ satisfy assumptions (H1) and (H2). There exists at most one
bounded solution of \eqref{helm perturbata} satisfying
\begin{equation}\label{rad cond}
   \lim_{R\to +\infty} \ints_{\partial \Omega_R}
    \Big{|} \frac{\partial u_0}{\partial \nu} - ikn(x) u_0
    \Big{|}^2 d\ell + \lsommaguid \sqrt{R} \ints_{\partial Q_R}
    \Big{|} \frac{\partial u_l}{\partial \nu} - i\beta_l u_l
    \Big{|}^2 d\ell =0;
\end{equation}
here, $\nu$ denote the outward normal and $\Omega_R$ and $Q_R$ are
given by \eqref{omega rho}.
\end{theorem}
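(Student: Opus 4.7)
My plan is the following. By linearity, it suffices to show that any bounded solution $u$ of the homogeneous equation ($f\equiv 0$) satisfying \eqref{rad cond} is identically zero. I would achieve this by establishing the integral representation
\begin{equation*}
u(\xi,\zeta) = -\ints_{\RR^2} G(x,z;\xi,\zeta)\, p(x,z)\, u(x,z)\, dx\, dz, \qquad (\xi,\zeta)\in\RR^2.
\end{equation*}
Once this is in hand, taking absolute values and invoking (H2) gives $\|u\|_\infty \leq C\|u\|_\infty$ with $C<1$, hence $u\equiv 0$. Hypothesis (H1) together with the boundedness of $u$ ensures that the integrand is in $L^1(\RR^2)$, so the right-hand side is well defined.

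The representation is obtained by fixing $(\xi,\zeta)$ and applying Green's second identity to $u$ and $G(\cdot,\cdot;\xi,\zeta)$ on a large bounded domain $D_R$, using $\Delta u + k^2 n^2 u = -pu$ together with the defining equation for $G$. This produces
\begin{equation*}
u(\xi,\zeta) + \ints_{D_R} G\, p\, u\, dx\, dz = \ints_{\de D_R} \bigl( u\, \de_\nu G - G\, \de_\nu u \bigr)\, d\ell,
\end{equation*}
so the crux of the argument is to show that the boundary integral vanishes along a suitable sequence $R\to+\infty$.

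To analyze the boundary term, I would decompose $u = u_0 + \sum_{l=1}^M u_l$ and $G = G_0 + \sum_{l=1}^M G_l$ into their radiating and guided components via the Titchmarsh eigenfunction expansion recalled in the preliminaries, and estimate each of the $(M+1)^2$ cross-pairings separately. For the radiating--radiating pairing I would integrate on $\de\Omega_R$, rewrite the integrand as
\begin{equation*}
u_0\, \de_\nu G_0 - G_0\, \de_\nu u_0 = u_0\bigl(\de_\nu G_0 - ikn\, G_0\bigr) - G_0\bigl(\de_\nu u_0 - ikn\, u_0\bigr),
\end{equation*}
and apply Cauchy--Schwarz: the Rellich-type term in \eqref{rad cond} makes $\de_\nu u_0 - iknu_0$ small in $L^2(\de\Omega_R)$, while the asymptotic estimates of $G_0$ from \cite{CM2},\cite{Ci2} control the other factor. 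For each guided--guided pairing I would instead work on $\de Q_R$ with $\beta_l$ in place of $kn$; the $\sqrt R$ weight in \eqref{rad cond} is calibrated exactly to compensate for the non-decay along the waveguide direction of $u_l$ and $G_l$, which, being essentially one-dimensional in $z$, do not vanish as $|z|\to\infty$. Mixed pairings involving different modes become negligible in the limit thanks to the orthogonality in $x$ of the Titchmarsh eigenfunctions and the oscillation in $z$ produced by mismatched propagation constants.

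The main obstacle I anticipate is the geometric incompatibility between $\de\Omega_R$ and $\de Q_R$: Green's identity provides a single integral over $\de D_R$, but different components of $u$ are controlled by the radiation condition only on their own preferred geometries. I would resolve this by taking $D_R$ as a carefully chosen hybrid of $\Omega_R$ and $Q_R$ and splitting $\de D_R$ into a radiating portion lying essentially on $\de\Omega_R$ and a guided portion lying essentially on $\de Q_R \cap \{|x|\leq h + \mathrm{const}\}$. The exponential decay of $u_l$ and $G_l$ for $|x|>h$, together with the decay of the radiative Green's function far from the waveguide axis, ensures that the pieces where the two geometries would disagree contribute negligibly. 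Once the boundary term is shown to vanish and the representation formula is established, hypothesis (H2) closes the argument.
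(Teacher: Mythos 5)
Your overall strategy --- derive the representation formula $u(\xi,\zeta)=-\int_{\RR^2}G\,p\,u$ for a bounded solution of the homogeneous problem and then conclude $u\equiv 0$ from (H2) --- is exactly the paper's. The gap is in how you propose to kill the boundary term. You apply Green's identity once, to the full $u$ and the full $G$, on a single hybrid domain $D_R$, and then decompose the boundary integral into $(M+1)^2$ cross-pairings $u_l\,\de_\nu G_m - G_m\,\de_\nu u_l$. The paper never faces these cross terms: it first shows (Lemma \ref{lemma ul soluzioni di}) that each component $u_l$ is itself a solution of $\Delta u_l+k^2n^2u_l=-\psi_l$, where $\psi_l$ is the corresponding projection of $pu$, and then applies Green's identity \emph{separately} to each pair $(u_l,G_l)$ on its own natural domain --- $\Omega_R$ for $l=0$, $Q_R$ for $l\geq 1$ (Lemma \ref{lemma rappresentazioni}). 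Summing these $M+1$ identities reproduces $u(\xi,\zeta)+\int G\,p\,u$ on the left (via Fubini--Tonelli and the completeness of the expansion), and each boundary term is a \emph{diagonal} pairing, estimated by Cauchy--Schwarz against the matching piece of \eqref{rad cond}. This sidesteps both problems you anticipate: there is no hybrid domain and there are no off-diagonal terms.

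The off-diagonal terms in your scheme are not merely technical. Take the radiating--guided pairing $u_0\,\de_\nu G_m - G_m\,\de_\nu u_0$ on the portion of the boundary near $\{|z|=R,\ |x|\lesssim h\}$: the radiation condition controls $\de_\nu u_0 - ikn\,u_0$ in $L^2(\de\Omega_R)$, but $G_m$ pairs naturally with the constant $\beta_m\neq kn$, so after the rewriting you propose there remains a term proportional to $(kn-\beta_m)\int u_0\,G_m\,d\ell$; since $|G_m|$ does not decay in $|z|$ and $u_0$ is only known to be bounded, this integral is a priori $O(1)$, not $o(1)$. Rescuing it requires exploiting the orthogonality $\int_{\RR}u_0(x,z)e(x,\gamma_m)\,dx=0$ for each fixed $z$, together with a corresponding statement for $\de_z u_0$, and a justification that the truncation to $|x|\leq R$ and the mismatch between your hybrid boundary and the level sets $\de\Omega_R$, $\de Q_R$ on which \eqref{rad cond} is actually stated produce only vanishing errors. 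None of this is supplied, and the appeal to ``oscillation in $z$ produced by mismatched propagation constants'' does not apply to the radiating component, whose phase is not a single exponential. As written, the proof is incomplete at precisely the step it identifies as the crux; the component-wise Green identities of the paper are the missing idea.
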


We notice that Theorem \ref{teo1} still holds if we consider the
following radiation condition
\begin{equation}\label{rad cond_II}
\lim_{R\to +\infty} \lsomma \ints_{\partial \Omega_R} \Big{|}
\frac{\partial u_l}{\partial \nu} - i\beta_l u_l \Big{|}^2 d\ell =0.
\end{equation}
We prefer to use \eqref{rad cond} because it better describes the
behaviour of guided modes: (i) the sets $Q_R$  suggest the geometry
of the level sets of the guided modes (which are straight lines in
the $x$-direction); (ii) the presence of $\sqrt{R}$ suggests that
guided modes are lower dimensional solution of the Helmholtz
equation.

\vspace{1em}

The latter result of this paper concerns the existence of a solution
satisfying \eqref{rad cond}. We shall assume that $p$ satisfies the
following additional assumption:
\begin{enumerate}[(H3)]
\item $p \in L^2(\RR^2)$ is such that
\begin{equation}\label{int vp}
\ints_{\de \Omega_R} |p|^2 d\ell \leq c_1 R^{-(3+2\delta)},
\end{equation}
for some constant $c_1>0$ and $\delta>\frac{1}{2}$.
\end{enumerate}
Then, our result is the following:

\begin{theorem} \label{teo2}
Let $f$ and $p$ satisfy the assumptions (H1) and (H3) and assume
that $p$ satisfies (H2), too. Then, there exist a unique bounded
solution of \eqref{helm perturbata} satisfying the radiation
condition \eqref{rad cond}.

In particular, such a solution is the only bounded solution of the
following integral equation:
\begin{equation}\label{eq integrale u}
u(x,z) = \ints_{\RR^2} G(x,z;\xi,\zi)
[f(\xi,\zi)-p(\xi,\zi)u(\xi,\zi)] d\xi d\zi.
\end{equation}
\end{theorem}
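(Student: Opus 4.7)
My plan is to construct the solution via a Banach fixed point in $L^\infty(\RR^2)$ applied to the integral equation \eqref{eq integrale u}, verify that this fixed point solves \eqref{helm perturbata}, and then check the radiation condition \eqref{rad cond} by analysing the modal decomposition of $G$. Uniqueness is then inherited from Theorem \ref{teo1}.

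On the space $X := L^\infty(\RR^2)$ define
\[
T_0 g(x,z) := \ints_{\RR^2} G(x,z;\xi,\zi)\, g(\xi,\zi)\, d\xi d\zi, \qquad Tu := T_0(pu).
\]
By the symmetry $G(x,z;\xi,\zi)=G(\xi,\zi;x,z)$, assumption (H2) rewrites as $\|T\|_{X\to X}<1$, while the compact $x$-support of $f$ from (H1) and its $L^2$-decay on $\partial\Omega_R$ from (H3) give $T_0 f \in X$. Hence the affine contraction $u\mapsto T_0 f - Tu$ has a unique fixed point in $X$, which is the unique bounded solution of \eqref{eq integrale u}. Since $G$ is a fundamental solution of $\Delta+k^2 n(x)^2$, applying this operator to \eqref{eq integrale u} yields $\Delta u+k^2 n^2 u = f - pu$ in the distributional sense, which is the natural interpretation of \eqref{helm perturbata} given the bounded-variation regularity of $n$.

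To check the radiation condition, set $g := f - pu$. By (H1) $g$ is compactly supported in $x$, and (H3) combined with $u\in L^\infty$ gives $\ints_{\partial\Omega_R}|g|^2 d\ell \leq c\, R^{-(3+2\delta)}$. Using the Titchmarsh/eigenfunction decomposition of \cite{Ci1,Ch,CM2},
\[
G(x,z;\xi,\zi) = G_{\mathrm{rad}}(x,z;\xi,\zi) + \lsommaguid \frac{\vp_l(x)\vp_l(\xi)}{2i\beta_l}\, e^{i\beta_l|z-\zi|},
\]
I split $u = u_0 + \sum_{l=1}^{M} u_l$. For each guided mode on the horizontal side $z=R$ of $\partial Q_R$, cutting the $\zi$-integral at $\zi=R$ produces an exact outgoing piece $\alpha_l^+(R)\vp_l(x)e^{i\beta_l z}$, which satisfies $\partial_\nu u_l-i\beta_l u_l\equiv 0$, plus an ingoing-type tail bounded pointwise by $\ints_{\zi>R}\|g(\cdot,\zi)\|_{L^1_x}d\zi$; the same splitting applies at $z=-R$, while the vertical sides $x=\pm R$ are negligible thanks to the exponential decay of $\vp_l$ outside the slab. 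A Cauchy--Schwarz argument using (H3) then gives $\sqrt R\,\ints_{\partial Q_R}|\partial_\nu u_l - i\beta_l u_l|^2 d\ell \to 0$. For the radiative piece $u_0$, I plan to combine the stationary-phase asymptotics of $G_{\mathrm{rad}}$ along $\partial\Omega_R$ derived in \cite{CM2,Ci2}, which produce a pointwise Sommerfeld relation with a controlled error, with the $R^{-(3+2\delta)}$ decay of $g$ to conclude $\ints_{\partial\Omega_R}|\partial_\nu u_0 - ikn(x) u_0|^2 d\ell \to 0$.

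The main obstacle will be this last step: making the continuous-spectrum asymptotics of $G_{\mathrm{rad}}$ work uniformly along $\partial\Omega_R$ across the interfaces $x=\pm h$, where the local propagation constant jumps between $kn_{co}(x)$ and $kn_\pm$, and matching its error terms against the decay of $g$ given by (H3). This is where the bulk of the technical work will lie, and is precisely where the hypothesis $\delta>1/2$ enters.
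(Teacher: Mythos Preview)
Your approach is correct and coincides with the paper's: the paper obtains boundedness of $u$ by the same contraction in $L^\infty(\RR^2)$ via (H2), and then verifies \eqref{rad cond} by applying two lemmas (Lemmas~\ref{lemma eidus} and~\ref{lemma eidus guidati}) to $\vp=f-pu$, which are exactly your sketched arguments for the radiative and guided components. One small remark: the hypothesis $\delta>\tfrac12$ is not used only in the radiative estimate but is equally needed in your guided-mode tail bound to beat the $\sqrt{R}$ weight, and for the radiative piece the paper's key device is a near/far split $\Omega_\rho$ with $\rho=R^s$ (so that the pointwise Sommerfeld asymptotics for $G_0$, which are uniform only for bounded $(\xi,\zi)$, can be applied on $\Omega_\rho$ while the decay of $g$ handles $\Omega_\rho^c$)---this is precisely the ``matching'' you anticipate.
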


We notice that, if the waveguide is not rectilinear, the propagation
constants $\beta_l$ become complex (see, for instance, \cite{KNH}).
Theorem \ref{teo2} guarantees that, under the given assumptions, the
propagation constants of the radiating and guided parts of the
solution are (approximately) the same as in the unperturbed case and
\eqref{rad cond} still guarantee the existence and uniqueness of a
solution. To the author's knowledge, it is not known if the exponent
$\delta$ in (H3) can be improved (see also \cite{Ei} for the case of
non-stratified medium).

\vspace{1em}

\noindent The paper is organized as follows.

In Section \ref{section
preliminaries}, we recall and prove some preliminary results which
will be useful in the rest of the paper.

Theorem \ref{teo1} will be proved in Section \ref{section
uniqueness}. The technique used is in the spirit of classical
results on the Helmholtz equation, in particular those contained in
\cite{Mi1} and \cite{Mi2}. Other techniques may be used to prove
such theorem (for instance, the Limiting Absorption Principle, see
\cite{Ho} and \cite{We}); we shall include our proof of Theorem
\ref{teo1} because it is simple and direct.

In Section \ref{section existence} we will prove Theorem \ref{teo2}.
Here, a careful analysis of the asymptotic behaviour of the solution
is done. Similar arguments for the free space case can be found in
\cite{Ei}.

We wish to mention that our approach can be generalized to
stratified media in higher dimensions and in more general unbounded
domains. Clearly, stratified media in higher dimensions may present
more than one kind of stratification (in three dimensions, for
instance, planar or cylindrical stratifications lead to different
behaviours of the solution). Once a uniform asymptotic expansion of
the Green's function is known, then it is possible to use the same
technique in this paper and obtain analogous results. This will be
the object of future work.

\section{Preliminaries} \label{section preliminaries}
In this section we recall and prove some results for the unperturbed
Helmholtz equation, which will be useful in the rest of the paper.
We notice that the case $n_+=n_-$ has been deeply studied in
\cite{MS},\cite{CM1},\cite{CM2} and \cite{Ci2} and we refer to such
works for a more extensive description of results and of the
formulation of the outgoing Green's function.

By following \cite{Wi} (see also Chapter 2 in \cite{Ci1}), we write
a solution $u$ of \eqref{Helm rectilinear} in terms of a Green's
function $G$, which is a superposition of solutions of the
associated homogeneous equation:
\begin{equation} \label{u}
u(x,z) = \ints_{\RR^2} G(x,z;\xi,\zi) f (\xi,\zi) d\xi d\zi,
\end{equation}
where
\begin{equation} \label{G G^rad+somma G_l^g}
G(x,z;\xi,\zi)= G_0(x,z;\xi,\zi) + \sum_{l=1}^M G_l (x,z;\xi,\zi),
\end{equation}
with
\begin{equation}\label{G l guid}
G_l(x,z;\xi,\zi) = \frac{e^{i\beta_l |z-\zi|}}{2i \beta_l}
e(x,\gamma_l) e(\xi,\gamma_l),\quad l=1,\ldots,M,
\end{equation}
and
\begin{equation}\label{beta_l}
\beta_l= \sqrt{k^2 n_*^2 - \gamma_l},\quad l=1,\ldots,M.
\end{equation}
Here, $\gamma_l$ and $e(x,\gamma_l)$, $l=1,\ldots,M$, are,
respectively, the eigenvalues and eigenfunctions of the eigenvalue
problem associated to \eqref{Helm rectilinear} and obtained by
separating the variables. In particular, by setting \begin{equation}
\label{n* lambda q} n_*=\max_{\RR}{n},\quad q(x) = k^2 [ n_*^2 -
n(x)^2 ],
\end{equation} $e(x,\gamma_l)$ is the only $C^1$ solution of
\begin{equation*}
e'' + [\gamma_l - q(x)] e = 0, \quad \textmd{in } \RR,
\end{equation*}
such that $\|e(\cdot,\gamma_l)\|_{L^2(\RR)} = 1$ and which vanishes
exponentially as $|x|\to +\infty$ (see \cite{MS}).
$G^g= \sum_{l=1}^M G_l$ represents the guided part of the Green's
function, which involves the guided modes, i.e. the modes
propagating mostly inside the waveguide; each $G_l,\: l=1,\ldots,M,$
corresponds to a single guided mode.

In \eqref{G G^rad+somma G_l^g}, $G_0$ is the part of the Green's
function corresponding to the non-guided energy, i.e. the energy
radiated outside the waveguide. In \cite{CM2} the case
$n_{cl}:=n_+=n_-$ has been carefully studied; in particular, it was
proved that, for $\xi$ and $\zi$ fixed, the following asymptotic
expansions
\begin{equation*} 
G_0=\OO(R^{-\frac{1}{2}}),\quad \frac{\de G_0}{\de \nu} - ik n_{cl}
G_0=\OO(R^{-\frac{3}{2}}),
\end{equation*}
uniformly as $R\to+\infty$ on the sets $\de \Omega_R$, given by
\eqref{omega rho}.

In the present paper, since we are allowing $n_+$ and $n_-$ to be
different, we shall make use of the following result:

\begin{lemma} \label{lemma Grad asintotica}
Let $G_0$ be the Green's function mentioned above. Then, for $\xi$
and $\zi$ fixed, we have
\begin{equation}\label{Grad asintotica}
\ints_{\de \Omega_R} |G_0|^2 d\sigma =\OO(1),\quad \ints_{\de
\Omega_R} \Big{|} \frac{\de G_0}{\de \nu} - ik n(x) G_0 \Big{|}^2
d\sigma=\OO(R^{-1}),
\end{equation}
where $\Omega_R$ is given by \eqref{omega rho}.
\end{lemma}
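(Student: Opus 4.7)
The plan is to establish a pointwise asymptotic expansion of $G_0$ on the boundary $\partial \Omega_R$ and then integrate, following the strategy used for the symmetric case $n_+=n_-$ in \cite{CM2} but now tracking the $n_\pm$ dependence in the integral representation of \cite{Ci1}, \cite{Ch}. First I would decompose $\partial \Omega_R$ into three pieces: the right arc $A_R^+ = \{(x,z): (x-h)^2+z^2=R^2,\ x>h\}$, the left arc $A_R^- = \{(x,z): (x+h)^2+z^2=R^2,\ x<-h\}$, and the two horizontal segments $S_R^\pm = \{(x,\pm R): |x|\le h\}$, which have bounded length independent of $R$.

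Next I would recall the spectral representation of $G_0$ coming from Titchmarsh theory, schematically
\begin{equation*}
G_0(x,z;\xi,\zi) = \ints \frac{e^{i\sqrt{k^2 n_*^2-\lam}\,|z-\zi|}}{2i\sqrt{k^2 n_*^2 - \lam}}\, \Phi(x,\xi;\lam)\, d\lam,
\end{equation*}
where $\Phi$ is built from generalized eigenfunctions of the transverse problem. In the half-line $x>h$ these eigenfunctions are asymptotically combinations of $e^{\pm i \radb\,(x-h)}$ (with $n_*$ replaced by $n_+$ in the relevant discriminant), and analogously on $x<-h$ with $n_-$. On the right arc, parameterising $(x-h,z-\zi)=R(\cos\theta,\sin\theta)$, the exponent becomes $R$ times a function of $(\theta,\lam)$; the stationary point in $\lam$ lies strictly inside the radiative spectrum whenever $\theta$ is bounded away from $\pm\pi/2$, and stationary phase yields
\begin{equation*}
G_0 = \frac{C_+(\theta;\xi,\zi)}{\sqrt{R}}\, e^{ikn_+ R}\bigl(1 + \OO(R^{-1})\bigr),\qquad \frac{\de G_0}{\de \nu} - ikn_+ G_0 = \OO(R^{-3/2}),
\end{equation*}
uniformly there, and the symmetric argument with $n_-$ handles $A_R^-$. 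On the short horizontal segments $S_R^\pm$ the variable $x$ stays bounded while $|z|=R\to\infty$, so a direct one-variable stationary-phase analysis in $\lam$ gives $G_0 = \OO(R^{-1/2})$ and $\de_z G_0 \mp i k n(x) G_0 = \OO(R^{-3/2})$ uniformly in $x$. Integrating these pointwise bounds then gives $\int_{\de \Omega_R} |G_0|^2 d\sigma \lesssim R\cdot R^{-1}=\OO(1)$ and $\int_{\de \Omega_R} |\de_\nu G_0 - ikn(x) G_0|^2 d\sigma \lesssim R\cdot R^{-3}=\OO(R^{-2})$, which is stronger than the claimed $\OO(R^{-1})$.

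The principal difficulty will lie at the four corners of $\partial \Omega_R$, namely the points $(\pm h,\pm R)$, where the outward normal is discontinuous and, more importantly, the stationary point in the spectral integral approaches the endpoint $\lam = k^2 n_\pm^2$ of the radiative spectrum as $\theta \to \pm \pi/2$; there the standard stationary-phase expansion degenerates and the matching between the arc and the horizontal-segment asymptotics is not automatic. I would handle a shrinking neighbourhood of each corner of angular size $\OO(R^{-\alpha})$ separately using a uniform (Van der Corput / Airy-type) estimate for the spectral integral, which loses at most a fixed power of $R$ compared to the bulk asymptotic; the allowance provided by the gap between the actual $\OO(R^{-2})$ bound and the weaker $\OO(R^{-1})$ stated in the lemma is precisely what absorbs this loss. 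This is the reason the lemma is formulated with these particular exponents, just as the analogous analysis in \cite{CM2} is carried out for the symmetric case.
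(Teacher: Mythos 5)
Your overall strategy --- splitting $\de\Omega_R$ into the two circular arcs and the two bounded horizontal segments, and extracting uniform asymptotics of $G_0$ from its spectral (contour-integral) representation by stationary phase --- is exactly what the paper intends: its own proof is a two-line reference to the uniform asymptotic expansion of $G_0$ obtained by following \cite{Ch} and \cite{CM2}, so you are on the same track and have supplied more detail than the paper does.

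There is, however, one step that fails as stated, and it is precisely the step that explains why the lemma claims only $\OO(R^{-1})$ rather than the $\OO(R^{-2})$ you announce. On the horizontal segments $S_R^\pm=\{(x,\pm R):|x|\le h\}$ you assert $\de_z G_0 \mp ikn(x)G_0=\OO(R^{-3/2})$ uniformly; but there $n(x)=n_{co}(x)$, and $kn_{co}(x)$ is not the propagation constant governing the far field of $G_0$ in the $z$-direction. Schematically, for any constant $\beta_0$ one has $\de_{|z|}G_0-i\beta_0 G_0=\int \frac{\beta-\beta_0}{2\beta}\,e^{i\beta|z-\zi|}\,\phi(x,\beta)\phi(\xi,\beta)\,d\rho(\beta)$, and the large-$|z|$ contribution at fixed $x$ is dominated by the endpoint $\beta=k\max(n_+,n_-)$ of the continuous spectrum; the extra power of $R^{-1}$ is gained only when $\beta_0$ equals that endpoint value (which is why \cite{CM2} states the expansion with the constant $n_{cl}$ on all of $\de\Omega_R$). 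With $\beta_0=kn_{co}(x)$ the factor $\beta-\beta_0$ does not vanish at the dominant endpoint, so the radiation combination on the segments is generically only $\OO(R^{-1/2})$ --- the same size as $G_0$ itself. Squaring and integrating over the segments, whose total length is the fixed number $4h$, yields exactly the $\OO(R^{-1})$ of the statement; this, and not the corner/transition-region loss you invoke, is the actual source of the weaker exponent. Two smaller points: the arcs meet the segments tangentially at $(\pm h,\pm R)$, so the outward normal is in fact continuous there (only the curvature jumps); and the genuine transition-region difficulty you correctly identify --- the stationary point migrating to the spectral endpoint as $\theta\to\pm\pi/2$ --- is what the uniform expansions of \cite{Ch} and \cite{CM2} are designed to control. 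With the segment contribution corrected, your argument does deliver the lemma as stated.
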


\begin{proof}
The results are a consequence of the (uniform) asymptotic expansion
of $G_0$ for $R$ large. That can be done by following \cite{Ch} and
\cite{CM2}.
\end{proof}

\begin{lemma} \label{lemma singolarita}
Let $(x,z),(\xi,\zi) \in \RR^2$ and $\omega=(x-\xi,z-\zi)$ with
$|\omega|\leq 1$. There exists a positive constant $C_1$ independent
on $x,z,\xi,\zi,$ such that
\begin{equation}
\big{|} G_0(x,z;\xi,\zi) - \frac{1}{2\pi} \log |\omega| \big{|} \leq
  C_1. \label{Green singularity}
\end{equation}
\end{lemma}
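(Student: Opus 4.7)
The plan is to reduce the singularity of $G_0$ to the logarithmic singularity of the full Green's function $G$, by isolating the fact that each guided contribution $G_l$ is globally bounded.

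First, I would verify the boundedness of the $G_l$. From \eqref{G l guid}, since $\IM \beta_l \geq 0$, one has $|G_l(x,z;\xi,\zeta)| \leq \frac{1}{2|\beta_l|}\|e(\cdot,\gamma_l)\|_{L^\infty(\RR)}^2$. The eigenfunction $e(\cdot,\gamma_l)$ is a $C^1$ solution of a second order ODE with bounded coefficient $\gamma_l - q$, is $L^2$-normalised, and decays exponentially at $\pm\infty$; a standard energy estimate then gives $\|e(\cdot,\gamma_l)\|_{L^\infty(\RR)} \leq C_l$. Hence $\sum_{l=1}^M G_l$ is uniformly bounded on $\RR^2 \times \RR^2$.

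Second, I would analyse the singularity of $G$ itself. Since $u=\int Gf$ solves $\Delta u + k^2 n^2 u = f$, the distribution $G$ satisfies $(\Delta + k^2 n(x)^2) G = \delta_{(\xi,\zeta)}$. To extract the singular part I would freeze the coefficient at the source: setting $n_0 = n(\xi)$ and $G^{\mathrm{free}}(x,z;\xi,\zeta) = -\tfrac{i}{4} H_0^{(1)}(k n_0 |\omega|)$ for the constant-coefficient outgoing Helmholtz fundamental solution, the small-argument expansion of $H_0^{(1)}$ yields
\[
G^{\mathrm{free}}(x,z;\xi,\zeta) = \tfrac{1}{2\pi}\log|\omega| + B(x,z;\xi,\zeta),
\]
with $B$ bounded on $\{|\omega| \leq 1\}$ uniformly in $(\xi,\zeta)$. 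The difference $H := G - G^{\mathrm{free}}$ satisfies $(\Delta + k^2 n^2) H = k^2(n_0^2 - n(x)^2)\,G^{\mathrm{free}} \in L^p_{\mathrm{loc}}$ for every $p<\infty$, so elliptic regularity forces $H$ to be locally bounded with a bound uniform in $(\xi,\zeta)$. Combining with the previous step via $G_0 = G - \sum_{l=1}^M G_l$ gives
\[
\Big|G_0(x,z;\xi,\zeta) - \tfrac{1}{2\pi}\log|\omega|\Big| \leq \big|G - \tfrac{1}{2\pi}\log|\omega|\big| + \sum_{l=1}^M |G_l| \leq C_1.
\]

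The main obstacle is the low regularity of $n$: the jumps across $x = \pm h$ preclude a direct appeal to smooth-coefficient parametrix constructions. I would resolve this by invoking the de Giorgi--Nash--Moser theory for bounded measurable coefficients, which gives local Hölder continuity (hence boundedness) of $H$ above. An alternative, more in the spirit of the rest of the paper, is to work directly with the Titchmarsh eigenfunction expansion defining $G_0$ as in \cite{Ch} and \cite{Wi}, and to read off the $\tfrac{1}{2\pi}\log|\omega|$ singularity from the high-spectral-frequency contribution of the continuous-spectrum integral, with the remainder being bounded by inspection.
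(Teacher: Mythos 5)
Your proposed route is genuinely different from the paper's. The paper never passes through the full Green's function $G$ or through elliptic regularity: it compares $G_0$ directly with the free-space kernel by writing both as contour integrals over the path $\Cam$ (formulas \eqref{G_fs represent}--\eqref{G_0 represent}) and using the large-$|t|$ asymptotics and boundedness of the spectral density $g(x,\xi;t)$ (Lemmas A.2 and A.3 of \cite{CM2}) to show that the difference of the two integrands is absolutely integrable on $\Cam$, uniformly in the relevant variables. Your first step --- uniform boundedness of each $G_l$ via $\beta_l>0$ and $\|e(\cdot,\gamma_l)\|_{L^\infty(\RR)}<\infty$, so that the singularity of $G_0$ coincides with that of $G$ --- is correct and is a legitimate reduction.

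The gap is in the elliptic-regularity step. To conclude from $(\Delta+k^2n^2)H=k^2(n_0^2-n(x)^2)\,G^{\mathrm{free}}\in L^p_{\mathrm{loc}}$ that $H$ is bounded on $\{|\omega|\leq 1\}$ \emph{uniformly in $(\xi,\zi)$}, De Giorgi--Nash--Moser (or Calder\'on--Zygmund plus bootstrapping) needs as input a bound on $\|H\|_{L^2(B_2(\xi,\zi))}$, or on some other $L^p$ norm over a slightly larger neighbourhood, that is itself uniform in $(\xi,\zi)$. Translation invariance in $z$ reduces you to $\zi=0$, but there is no invariance in $x$, and no such a priori bound is supplied: a uniform local $L^2$ estimate for $G$ near its singularity is essentially equivalent in difficulty to the inequality \eqref{Green singularity} you are trying to prove, so as written the argument is circular. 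Supplying that input requires quantitative control of the kernel coming from its spectral construction --- which is exactly where the paper works. Indeed, the ``alternative'' you mention in closing (reading off the $\tfrac{1}{2\pi}\log|\omega|$ from the high-spectral-frequency part of the Titchmarsh/contour representation, with a bounded remainder) is precisely the paper's proof; in your write-up it is a one-sentence sketch, whereas the substance of the lemma lies in making that remainder estimate uniform, which is what the asymptotic expansion of $g$ on $\Cam$ accomplishes.
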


\begin{proof}
In order to avoid heavy calculations, we carry out the scheme of the
proof only for the case studied in \cite{CM2}, i.e. for $n_{cl} :=
n_+ = n_- $.

Instead of proving \eqref{Green singularity}, we shall prove that
$\big{|} G_0(x,z;\xi,\zi) - G_{FS}(x,z;\xi,\zi) \big{|}$ is
uniformly bounded; here, we denoted by $G_{FS}$ the outgoing Green's
function of the free-space case, i.e. $\displaystyle G_{FS}
(x,z;\xi,\zi) = (4\pi i)^{-1} H_0^{(1)} (k n_{cl} |\omega|),$ where
$H_0^{(1)}$ is the zeroth-order Hankel function of the first kind.

The following integral representations will be useful for proving
the lemma:
\begin{eqnarray}
&& G_{FS}(x,z;\xi,\zi) = \frac{1}{4\pi i} \ints_{\Cam} e^{ikn_{cl}
[(x-\xi) \sin{t} + |z-\zi| \cos{t}]} dt, \label{G_fs represent} \\
&& G_0(x,z;\xi,\zi) = \ints_{\Cam} g(x,\xi;t) e^{ikn_{cl} ( [x]_h
\sin{t} + |z-\zi| \cos{t})} dt, \label{G_0 represent}
\end{eqnarray}
with $\Cam$ being the contour path shown in Fig.\ref{Fig Cam}. We
shall not write the explicit expression of $g$ in \eqref{G_0
represent} and we refer to formula (3.8) in \cite{CM2} for details
since, here, we will make use only of the following asymptotic
formula:
\begin{equation*}
g(x,\xi;t)= \frac{1}{4\pi i} \: e^{i kn_{cl} ( \{x\}_h - \xi ) \sin
t} \bigg\{ 1+ \frac{i}{2kn_{cl} \sin t} \ints_{\{\xi\}_h}^{\{x\}_h}
[d^2 - q(y)] dy \bigg\} + \OO \bigg(\frac{1}{|\sin t|^2} \bigg),
\end{equation*}
which holds as $|t| \to \infty$ on $\Cam$, uniformly for $x \in \RR$
and $\xi$ bounded (see Lemma A.2 in \cite{CM2}); here, $\{x\}_h :=
x-[x]_h$, with $[x]_h$ defined by \eqref{xh quadra}. Lemma A.3 in
\cite{CM2} assures that $g$ is bounded on $\Cam$. Thus, \eqref{Green
singularity} follows straightforwardly from \eqref{G_fs
represent},\eqref{G_0 represent}, the above asymptotic expansion of
$g$ and by observing that $\{x\}_h + [x]_h = x$.
\begin{figure}
\centering
\includegraphics[width=0.3\textwidth]{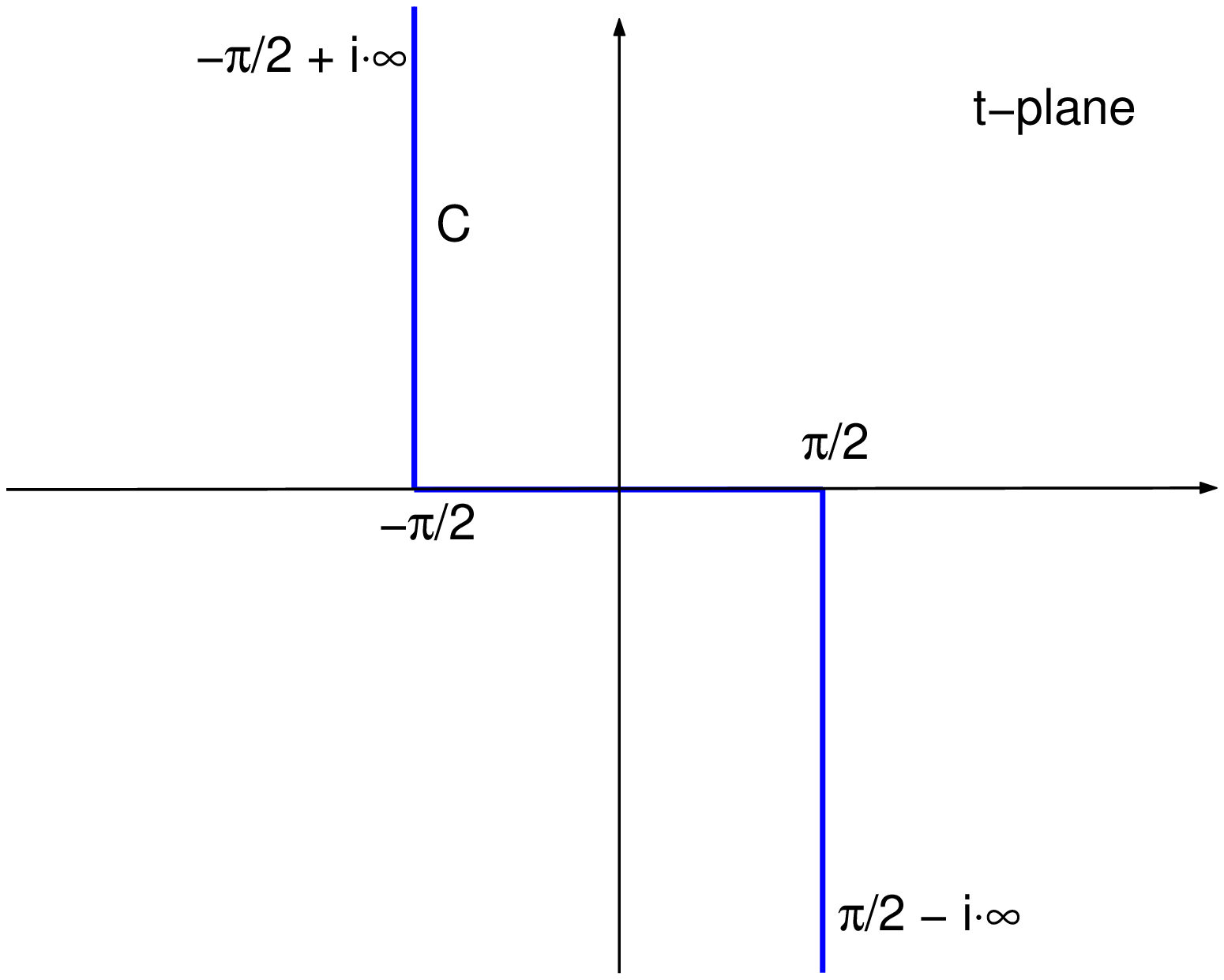}
\caption{The contour $\Cam$.} \label{Fig Cam}
\end{figure}
\end{proof}

\section{Proof of Theorem \ref{teo1}} \label{section uniqueness}
We consider a solution $u$ of \eqref{helm perturbata} and define
\begin{subequations} \label{u0 ul def}
\begin{equation}\label{u_l}
u_l(x,z)= e(x,\gamma_l) \ints_{-\infty}^{+\infty} u(\xi,z)
e(\xi,\gamma_l) d\xi, \quad l=1,\ldots,M,
\end{equation}
and
\begin{equation}\label{u_0}
    u_0 (x,z)= u(x,z) - \sum\limits_{l=1}^{M}  u_l(x,z).
\end{equation}
\end{subequations}

\begin{lemma} \label{lemma ul soluzioni di}
Let $u$ be a weak solution of
\begin{equation}\label{helm perturbata omogenea}
\Delta u + [k^2 n(x)^2 + p(x,z)] u = 0, \quad (x,z) \in \RR^2,
\end{equation}
and define $u_l, l=0,1,\ldots,M$, as in \eqref{u0 ul def}. Then,
$u_l$ is a weak solution of
\begin{equation*}
\Delta u_l + k^2 n(x)^2 u_l = -\psi_l, \quad l=0,1,\ldots,M,
\end{equation*}
where we set
\begin{subequations} \label{psi}
\begin{equation}
\psi_l (x,z) = e(x,\gamma_l) \ints_{\RR} p(\xi,z) u(\xi,z)
e(\xi,\gamma_l) d\xi, \quad l=1,\ldots,M,
\end{equation}
and
\begin{equation}
\psi_0 = p u - \sum\limits_{l=1}^M \psi_l.
\end{equation}
\end{subequations}
\end{lemma}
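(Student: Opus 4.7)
The plan is to exploit the product structure of $u_l$ for $l\geq 1$ and the separation-of-variables equation satisfied by $e(x,\gamma_l)$, which reduces the assertion to a one-dimensional ODE for the profile
$$F_l(z) := \ints_{\RR} u(\xi,z) e(\xi,\gamma_l)\, d\xi,$$
so that $u_l(x,z)=e(x,\gamma_l)F_l(z)$. Recalling that $q(x)=k^2[n_*^2-n(x)^2]$ and $\beta_l^2=k^2 n_*^2-\gamma_l$, the eigenfunction equation $e''+[\gamma_l-q(x)]e=0$ rewrites as $e''(x,\gamma_l)+k^2n(x)^2 e(x,\gamma_l)=\beta_l^2 e(x,\gamma_l)$. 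A direct (formal) computation then gives
$$\Delta u_l + k^2 n(x)^2 u_l = e(x,\gamma_l)\bigl[F_l''(z)+\beta_l^2 F_l(z)\bigr],$$
so the claim $\Delta u_l+k^2n^2 u_l=-\psi_l$ is equivalent to the one-dimensional identity $F_l''+\beta_l^2 F_l=-\langle p(\cdot,z)u(\cdot,z),e(\cdot,\gamma_l)\rangle_{L^2(\RR)}$ in the sense of distributions on the $z$-line.

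To establish this identity rigorously, I would test the weak formulation of \eqref{helm perturbata omogenea} against functions of the separated form $\Phi(x,z)=e(x,\gamma_l)\phi(z)$ with $\phi\in C_c^\infty(\RR)$. Since $\Phi$ is not compactly supported in $x$, admissibility must be justified: using that $u$ is bounded and that $e(\cdot,\gamma_l)$ together with $e'(\cdot,\gamma_l)$ decays exponentially as $|x|\to\infty$ (a standard consequence of $\gamma_l$ being in the discrete spectrum), one first tests against $\chi_N(x)e(x,\gamma_l)\phi(z)$ for a smooth cutoff $\chi_N$ supported in $\{|x|\leq N+1\}$ and then passes to the limit $N\to\infty$; the boundary contributions tend to $0$ thanks to the exponential decay. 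Inside the integral, transferring the two $x$-derivatives from $u$ onto $e(x,\gamma_l)$ and applying the eigenfunction equation cancels the $k^2 n^2$ terms up to a factor $\beta_l^2$, leaving
$$\ints_{\RR} F_l(z)\phi''(z)\,dz + \beta_l^2\ints_{\RR} F_l(z)\phi(z)\,dz + \ints_{\RR}\phi(z)\ints_{\RR} p(\xi,z)u(\xi,z)e(\xi,\gamma_l)\,d\xi\,dz = 0,$$
which is precisely the distributional form of $F_l''+\beta_l^2 F_l=-\int p u\, e(\cdot,\gamma_l)\,d\xi$. Multiplying through by $e(x,\gamma_l)$ then gives the stated equation for $u_l$.

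For the radiative part $u_0=u-\sum_{l=1}^M u_l$, I would argue purely by linearity of the operator $\Delta+k^2 n(x)^2$: subtracting the $M$ identities just obtained from the equation $\Delta u+k^2 n^2 u=-pu$ (which holds in the weak sense by hypothesis) produces
$$\Delta u_0 + k^2 n(x)^2 u_0 = -pu + \sum_{l=1}^M \psi_l = -\Bigl(pu-\sum_{l=1}^M\psi_l\Bigr) = -\psi_0,$$
exactly the definition of $\psi_0$ in \eqref{psi}.

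The main technical obstacle is the truncation/density argument that legitimizes testing against the non-compactly supported function $e(x,\gamma_l)\phi(z)$; once the exponential decay of $e$ and $e'$ is used together with the boundedness of $u$, the boundary terms in $x$ vanish in the limit and the computation is essentially algebraic. Everything else — the explicit relation $e''+k^2n^2 e=\beta_l^2 e$, differentiation of a product, and linearity for $u_0$ — is routine.
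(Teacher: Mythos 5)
Your proof is correct: the identity $e''+k^2n(x)^2e=\beta_l^2 e$, the reduction to the one-dimensional equation $F_l''+\beta_l^2F_l=-\int p\,u\,e(\cdot,\gamma_l)\,d\xi$ by testing against separated functions $e(x,\gamma_l)\phi(z)$ with a cutoff in $x$ (legitimate thanks to the exponential decay of $e$ and $e'$ and the boundedness of $u$), and the linearity argument for $u_0$ all check out. The paper itself omits the proof, deferring to Theorem 2.6 of \cite{CM2}; your argument is precisely the standard projection-onto-guided-modes computation that such a proof consists of, so there is nothing substantive to compare.
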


\begin{proof}
The proof is analogous to part of the proof of Theorem 2.6 in
\cite{CM2} and hence is omitted.
\end{proof}

\begin{lemma} \label{lemma rappresentazioni}
Let $(\xi,\zi) \in \RR^2$ be fixed and $R$ be such that $(\xi,\zi)
\in \Omega_R$. Let $u$ be a solution of \eqref{helm perturbata
omogenea}; then, we have the following identities:
\begin{subequations} \label{rappr u0 ul}
\begin{equation} \label{rappr u0}
u_0(\xi,\zi) + \ints_{\Omega_R} G_0 \psi_0 dx dz = \ints_{\de
\Omega_R} \left(u_0 \frac{\de G_0}{\de \nu} - G_0 \frac{\de u_0}{\de
\nu} \right) d\ell ,
\end{equation}
and
\begin{equation} \label{rappr u l}
e(\xi,\gamma_l) \ints_{-R}^R e(s,\gamma_l) u(s,\zi) ds  +
\ints_{Q_R} G_l \psi_l dx dz = \ints_{\de Q_R} \left(u_l \frac{\de
G_l}{\de \nu} - G_l \frac{\de u_l}{\de \nu} \right) d\ell,
\end{equation}
\end{subequations}
for $l=1,\ldots,M$, and where $\psi_l$, $l=0,\ldots,M$, are given by
\eqref{psi}.
\end{lemma}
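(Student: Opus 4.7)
My plan is to derive both identities via Green's second identity applied to the appropriate pairs on $\Omega_R$ and $Q_R$, after first recording the distributional equations satisfied by the kernels $G_0$ and $G_l$.

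First I would compute $(\Delta+k^2 n^2)G_l$ and $(\Delta+k^2 n^2)G_0$ in $\mathcal{D}'(\RR^2)$. Using the transverse eigenvalue equation $e''(\cdot,\gamma_l)+[\gamma_l-q]\,e(\cdot,\gamma_l)=0$, the identity $\beta_l^2=k^2 n_*^2-\gamma_l$, and the fact that $z\mapsto e^{i\beta_l|z-\zi|}/(2i\beta_l)$ is the outgoing Green's function of $\de_z^2+\beta_l^2$, a straightforward calculation gives
\begin{equation*}
(\Delta+k^2 n^2)G_l(\cdot;\xi,\zi)=e(x,\gamma_l)\,e(\xi,\gamma_l)\,\delta(z-\zi)
\end{equation*}
in the sense of distributions. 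Together with the defining relation $(\Delta+k^2 n^2)G=\delta_{(\xi,\zi)}$ this yields the distributional source of $G_0$, namely $\delta_{(\xi,\zi)}-\sum_l e(x,\gamma_l)e(\xi,\gamma_l)\delta(z-\zi)$. Lemma \ref{lemma singolarita} gives the logarithmic behaviour of $G_0$ at $(\xi,\zi)$, so that $G_0$ is locally integrable.

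For \eqref{rappr u l} I would apply Green's second identity to $(u_l,G_l)$ on $Q_R$; there is no singularity to extract. Lemma \ref{lemma ul soluzioni di} turns the $u_l$-part of the volume integral into $\ints_{Q_R}G_l\psi_l\,dxdz$, while the transverse delta in the source of $G_l$ collapses the remaining volume contribution to a one-dimensional trace along $\{z=\zi\}$ which, after using $u_l(x,\zi)=e(x,\gamma_l)\ints_\RR u(s,\zi)e(s,\gamma_l)\,ds$ and the $L^2(\RR)$-normalization of the eigenfunctions, becomes $e(\xi,\gamma_l)\ints_{-R}^R e(s,\gamma_l)u(s,\zi)\,ds$. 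The crucial structural observation on the boundary is that both $u_l(\cdot,z)$ and $G_l(\cdot,z;\xi,\zi)$ are proportional in $x$ to $e(\cdot,\gamma_l)$; hence on each vertical segment $x=\pm R$ of $\de Q_R$ the Wronskian-type integrand $u_l\de_\nu G_l-G_l\de_\nu u_l$ vanishes pointwise, and only the horizontal edges $z=\pm R$ contribute to the boundary integral.

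For \eqref{rappr u0} I would apply Green's second identity to $(u_0,G_0)$ on $\Omega_R\setminus\overline{B_\ep(\xi,\zi)}$ and pass to the limit $\ep\to 0^+$. By Lemma \ref{lemma singolarita} and the classical argument for the logarithmic fundamental solution, the integral on $\de B_\ep$ converges to $u_0(\xi,\zi)$. The $\delta(z-\zi)$ pieces in the distributional source of $G_0$ produce cross-terms of the form $-\sum_{l=1}^M e(\xi,\gamma_l)\ints u_0(x,\zi)\,e(x,\gamma_l)\,dx$, which vanish because $u_0$ is orthogonal to every $e(\cdot,\gamma_l)$ in $L^2(\RR)$ by construction of the decomposition \eqref{u0 ul def}; the remaining volume term on the left-hand side is precisely $\ints_{\Omega_R}G_0\psi_0\,dxdz$ by Lemma \ref{lemma ul soluzioni di}. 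The main technical subtlety, which is where I expect the proof to have to work hardest, is verifying that the full-line orthogonality of $u_0$ to the $e(\cdot,\gamma_l)$ is compatible with the finite geometry of $\Omega_R$; this is handled using hypothesis (H1) on the transverse support of $p$ and the exponential decay of the eigenfunctions outside a bounded strip in $x$.
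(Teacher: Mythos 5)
Your proposal follows essentially the same route as the paper's proof: Green's second identity combined with Lemma \ref{lemma ul soluzioni di}, excising the logarithmic singularity of $G_0$ at $(\xi,\zi)$ to obtain \eqref{rappr u0} and the line singularity of $G_l$ along $\{z=\zi\}$ to obtain \eqref{rappr u l}; your explicit computation of the distributional sources of $G_l$ and $G_0$ and your observation that the Wronskian-type integrand vanishes identically on the vertical edges of $\de Q_R$ are correct refinements of what the paper leaves implicit. The one caveat is that the cross-terms you dispose of by orthogonality of $u_0(\cdot,\zi)$ to $e(\cdot,\gamma_l)$ are integrals over the bounded set $\Omega_R\cap\{z=\zi\}$ rather than over all of $\RR$, so by the exponential decay of the eigenfunctions they are only $\OO(e^{-cR})$ rather than exactly zero --- but this matches the precision of the paper's own statement (the same remark applies to replacing $\big(\ints_{-R}^{R} e(s,\gamma_l)^2\, ds\big)\ints_{\RR} e(s,\gamma_l)u(s,\zi)\, ds$ by $\ints_{-R}^{R} e(s,\gamma_l)u(s,\zi)\, ds$ in \eqref{rappr u l}) and is harmless, since the lemma is only invoked in the limit $R\to+\infty$.
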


\begin{proof} Thanks to Lemma \ref{lemma ul soluzioni di} we have
that
\begin{equation*}
\ints_{D} (u_l \Delta G_l - G_l \Delta u_l) dx dz = \ints_{D} \big[
u_l (\Delta G_l + k^2n(x)^2 G_l) + G_l \psi_l \big] dx dz,
\end{equation*}
for $l=0,1,\ldots,M$, and where $D$ is a (smooth enough) bounded
domain (notice that, since $u$ is a weak solution of \eqref{helm
perturbata omogenea}, by Theorem 8.8 in \cite{GT}, the integrals in
\eqref{identita integrali volume} make sense). The above formula and
the second Green's identity yield
\begin{equation} \label{identita integrali volume}
\ints_{\partial D} \left(u_l \frac{\partial G_l}{\partial \nu} - G_l
\frac{\partial u_l}{\partial \nu} \right) d\ell = \ints_{D} \big[
u_l (\Delta G_l + k^2n(x)^2 G_l) + G_l \psi_l \big] dx dz,
\end{equation}
for $l=0,1,\ldots,M$.

Firstly, consider the case $l=0$. Thanks to Lemma \ref{lemma
singolarita}, we know that $G_0$ has a singularity for
$(x,z)\equiv(\xi,\zi)$. We denote by $B_\ep$ the ball centered in
$(\xi,\zi)$ of radius $\ep$ and consider \eqref{identita integrali
volume} with $D=\Omega_R \setminus B_\ep$; thus, \eqref{rappr u0}
follows from \eqref{identita integrali volume}, Lemma \ref{lemma
singolarita} and by taking the limit as $\ep \to 0^+$.

Now, let $l=1,\ldots,M$ be fixed. From \eqref{G l guid} it follows
that $\Delta G_l + k^2n(x)^2 G_l$ has a singularity for $z=\zi$. By
setting $D=Q_R \setminus \{(x,z)\in\RR^2: |z-\zi|<\ep\}$ in
\eqref{identita integrali volume}, we obtain \eqref{rappr u l} by
taking the limit as $\ep\to 0^+$.
\end{proof}

\begin{proofTeo1}
Let assume that $u^1$ and $u^2$ are two bounded solutions of
\eqref{helm perturbata} satisfying \eqref{rad cond} and consider
$u=u^1-u^2$. It is clear that $u$ is a bounded solution of
\eqref{helm perturbata omogenea} and satisfies \eqref{rad cond}. We
write $u=u_0+u_1+\ldots+u_M$ as done in \eqref{u0 ul def}.

Let $(\xi,\zi)$ be fixed and consider $R$ large enough such that
$(\xi,\zi) \in \Omega_R$. We set
\begin{equation*}\Omega_R^{(l)} =
\begin{cases}
\Omega_R, & l=0, \\
Q_R, & l=1,\ldots,M,
\end{cases}
\end{equation*}
and
\begin{equation*}
J(R)=u_0(\xi,\zi) + \sum_{l=1}^M e(\xi,\gamma_l) \ints_{-R}^R
e(s,\gamma_l) u(s,\zi) ds + \sum_{l=0}^M \ints_{\Omega_R^{(l)}} G_l
\psi_l dx dz.
\end{equation*}
By summing up identities \eqref{rappr u0 ul} for $l=0,1,\ldots,M$
and thanks to a simple manipulation, we obtain that
\begin{equation*}
J(R) = \sum_{l=0}^M \ints_{\de \Omega_R^{(l)}} \left[u_l \left(
\frac{\de G_l}{\de \nu} - i\beta_l G_l\right) - G_l \left( \frac{\de
u_l}{\de \nu} - i\beta_l u_l \right) \right] d\ell,
\end{equation*}
where we set $\beta_0:=kn(x)$ (since it is not relevant in this
proof, we are omitting the dependence of $\beta_0$ on $x$).
Triangular and Cauchy-Schwartz inequalities yield
\begin{multline} \label{ineq in teo}
|J(R)| \leq \sum_{l=0}^M  \Bigg( \ints_{\de \Omega_R^{(l)}} |u_l|^2
d\ell \Bigg)^{\frac{1}{2}} \Bigg( \ints_{\de \Omega_R^{(l)}}
\bigg{|}\frac{\de
G_l}{\de \nu} - i\beta_l G_l\bigg{|}^2 d\ell \Bigg)^{\frac{1}{2}} \\
+ \Bigg( \ints_{\de \Omega_R^{(l)}} |G_l|^2 d\ell
\Bigg)^{\frac{1}{2}} \Bigg( \ints_{\de \Omega_R^{(l)}}
\bigg{|}\frac{\de u_l}{\de \nu} - i\beta_l u_l\bigg{|}^2 d\ell
\Bigg)^{\frac{1}{2}}.
\end{multline}

Thanks to \eqref{u0 ul def}, Lemma 3.7 in \cite{CM2} and
Fubini-Tonelli's theorem, we obtain that
\begin{equation} \label{limite J(R)}
\lim_{R\to +\infty} J(R) = u(\xi,\zi) + \ints_{\RR^2} G(x,z;\xi,\zi)
p(x,z) u (x,z) dx dz.
\end{equation}
From \eqref{G l guid}, \eqref{Grad asintotica} and since each $u_l,
l=0,1,\ldots,M,$ is bounded, we have that
\begin{equation*}
\ints_{\de \Omega_R^{(0)}} |G_0|^2 d\ell = \OO(1), \quad  \
\ints_{\de \Omega_R^{(l)}} |G_l|^2 d\ell = \OO(R), \ \ l=1,\ldots,M,
\end{equation*}
and
\begin{equation*}
\ints_{\de \Omega_R^{(l)}} |u_l|^2 d\ell = \OO(R),\quad
l=0,1,\ldots,M,
\end{equation*}
as $R\to +\infty$; furthermore, from \eqref{G l guid} we easily get
that
\begin{equation*}
\ints_{\de \Omega_R^{(l)}} \bigg{|}\frac{\de G_l}{\de \nu} -
i\beta_l G_l\bigg{|}^2 d\ell, \quad l=1,\ldots,M,
\end{equation*}
vanishes exponentially as $R\to +\infty$. From the above asymptotic
estimates, \eqref{Grad asintotica} and since $u$ satisfies
\eqref{rad cond}, it follows that the right hand side of \eqref{ineq
in teo} vanishes as $R\to +\infty$. Thus, by taking the limit for
$R\to +\infty$ in \eqref{ineq in teo}, from \eqref{limite J(R)} we
have that
\begin{equation*}
u(\xi,\zi) + \ints_{\RR^2} G(x,z;\xi,\zi) p(x,z) u (x,z) dx dz = 0.
\end{equation*}
Since $u$ is bounded and by setting $L=\sup\limits_{(x,z) \in
\RR^2}|u(x,z)|$, from the above formula we have
\begin{equation*}
L \leq L \sup_{(\xi,\zi) \in \RR^2} \ints_{\RR^2} |G(x,z;\xi,\zi)
p(x,z)| dx dz,
\end{equation*}
which, together with \eqref{p condizioni}, implies that $L=0$, i.e.
$u_1=u_2$.
\end{proofTeo1}

\section{Proof of Theorem \ref{teo2}} \label{section existence}
The proof of Theorem \ref{teo2} is a consequence of the following
two lemmas.

\begin{lemma} \label{lemma eidus}
Let $\vp \in L^2(\RR^2)$ be a complex valued function satisfying
(H1) and (H3). Then, the function
\begin{equation*}
w_0(x,z)= \ints_{\RR^2} G_0(x,z;\xi,\zi) \vp(\xi,\zi) d\xi d\zi
\end{equation*}
satisfies
\begin{equation*}
\lim_{R\to +\infty}\ints_{\partial \Omega_R}
    \Big{|} \frac{\partial w_0}{\partial \nu} - ikn(x) w_0
    \Big{|}^2 d\ell = 0,
\end{equation*}
with $\Omega_R$ given by \eqref{omega rho}.
\end{lemma}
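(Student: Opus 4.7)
The plan is to rewrite the radiation defect of $w_0$ as an integral of $\varphi$ against the kernel
$$K(x,z;\xi,\zeta) := \frac{\partial G_0}{\partial \nu}(x,z;\xi,\zeta) - ikn(x) G_0(x,z;\xi,\zeta),$$
to bring the boundary $L^2$ norm inside the source integration by Minkowski's integral inequality, and to split the source variable at the intermediate scale $\Omega_{R/2}$. On the near piece the uniform asymptotic expansion underlying Lemma \ref{lemma Grad asintotica} will do the job; on the far piece the smallness of $\varphi$ forced by (H3) takes over.

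First I would commute the derivative and the integral (legitimate because of the logarithmic singularity of $G_0$ from Lemma \ref{lemma singolarita} and because $\varphi \in L^2$ with compact $x$-support by (H1)) to write
$$\frac{\partial w_0}{\partial \nu}(x,z) - ikn(x) w_0(x,z) = \int_{\RR^2} K(x,z;\xi,\zeta)\, \varphi(\xi,\zeta)\, d\xi\, d\zeta,$$
and then apply Minkowski's integral inequality in $L^2(\partial \Omega_R, d\ell)$:
$$\Bigl(\ints_{\partial \Omega_R} \bigl|\tfrac{\partial w_0}{\partial \nu} - ikn(x) w_0 \bigr|^2 d\ell\Bigr)^{1/2} \leq \ints_{\RR^2} \kappa_R(\xi,\zeta)\, |\varphi(\xi,\zeta)|\, d\xi\, d\zeta,$$
where $\kappa_R(\xi,\zeta) := \|K(\cdot;\xi,\zeta)\|_{L^2(\partial \Omega_R)}$. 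It remains to show that this right-hand side tends to $0$ as $R \to +\infty$.

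Next, I would split the source integration at $\Omega_{R/2}$ into a near contribution $J_1(R)$ and a far contribution $J_2(R)$. For $J_1$, every $(x,z) \in \partial \Omega_R$ is at distance $\geq R/2$ from any $(\xi,\zeta) \in \Omega_{R/2}$, so the uniform asymptotic expansion of $G_0$ that underlies Lemma \ref{lemma Grad asintotica} (see \cite{Ch},\cite{CM2}) can be made uniform in such sources, producing $\kappa_R(\xi,\zeta) \leq C R^{-1/2}$. The finiteness of $\|\varphi\|_{L^1(\RR^2)}$ follows because (H1) keeps the arcs $\partial \Omega_\rho \cap \supp \varphi$ of uniformly bounded length, so Cauchy--Schwarz and (H3) give $\ints_{\partial \Omega_\rho} |\varphi|\, d\ell = O(\rho^{-(3/2+\delta)})$, which is integrable in $\rho$ thanks to $\delta > \tfrac{1}{2}$. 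Hence $J_1(R) \leq C R^{-1/2} \|\varphi\|_{L^1(\RR^2)} \to 0$.

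The same Cauchy--Schwarz-and-(H3) estimate yields $\ints_{\RR^2 \setminus \Omega_{R/2}} |\varphi|\, d\xi\, d\zeta = O(R^{-(1/2+\delta)}) \to 0$, so to control $J_2(R)$ it suffices to pair this decay with a bound on $\kappa_R(\xi,\zeta)$ uniform both in $R$ and in $(\xi,\zeta)$. Here lies the main obstacle: when $(\xi,\zeta) \in \partial \Omega_R$, the kernel $K$ carries a radial $|\omega|^{-1}$ singularity that is not square-integrable along an arc crossing it, so $\kappa_R$ is genuinely \emph{not} uniformly finite. I would overcome this by one further splitting: keep $(\xi,\zeta)$ at distance at least $1$ from $\partial \Omega_R$ (where the asymptotic expansion of $G_0$ furnishes the harmless uniform bound $\kappa_R \leq C$), and handle the residual thin annular strip around $\partial \Omega_R$ by Cauchy--Schwarz in $(\xi,\zeta)$, combining the $L^2_{\xi\zeta}$-local integrability of $G_0$ and $\nabla G_0$ on bounded regions (via Lemma \ref{lemma singolarita} and its derivative version) with the $L^2$-smallness of $\varphi$ on that strip (once more from (H3)). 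Summing $J_1(R)$ and $J_2(R)$ gives the conclusion.
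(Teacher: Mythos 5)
Your overall architecture --- differentiate under the integral, pull the $L^2(\partial\Omega_R)$ norm inside via Minkowski, split the source region into a near part handled by far-field asymptotics and a far part handled by the decay forced by (H1)+(H3), and treat the diagonal singularity separately --- matches the paper's. But the two places where you appeal to ``uniformity'' are exactly the two places where the proof is delicate, and both of your claimed bounds fail as stated.

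First, the near piece. The bound $\kappa_R(\xi,\zeta)\le C R^{-1/2}$ cannot hold uniformly for $(\xi,\zeta)\in\Omega_{R/2}$. The defect $K=\partial_\nu G_0-ikn(x)G_0$ is taken along $\nu=\nabla d(x,z)$, the normal of the \emph{origin-centred} set $\partial\Omega_R$, whereas the far field of $G_0$ is an outgoing wave whose natural radial direction is $\nabla d(x,z-\zeta)$ (the kernel depends on $z,\zeta$ only through $z-\zeta$). The mismatch $|\nabla d(x,z)-\nabla d(x,z-\zeta)|$ is of order $|\zeta|/R$, which is $O(1)$ when $|\zeta|$ is comparable to $R$; multiplied by $|\nabla G_0|=O(R^{-1/2})$ it contributes $O(|\zeta|/R)$ to $\kappa_R$, not $O(R^{-1/2})$. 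This is precisely why the paper's proof writes the near contribution as $I_1+I_2$: $I_1$ uses the recentred direction $\nabla d(x,z-\zeta)$, for which the uniform expansion genuinely gives $O(R^{-3/2})$, while $I_2$ carries the correction $\nabla G_0\cdot\nabla[d(x,z)-d(x,z-\zeta)]$, which is then controlled by $\frac1R\ints_{\Omega_\rho}|\zeta|\,|\vp|$ via the coarea formula and (H3) (with the cut placed at $\rho=R^s$, $s<1$, to balance exponents). Your $J_1$ can be repaired in the same way, since $\frac1R\ints_{\Omega_{R/2}}|\zeta|\,|\vp|\to0$ when $\delta>\frac12$, but that repair is the substance of the proof, not a remark.

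Second, the strip around $\partial\Omega_R$. You invoke ``the $L^2_{\xi\zeta}$-local integrability of $G_0$ and $\nabla G_0$''; but $\nabla G_0$ is \emph{not} locally square integrable: near the diagonal it behaves like $(2\pi|\omega|)^{-1}$, and $\ints_{B_1}|\omega|^{-2}\,d\xi\,d\zeta$ diverges. Equivalently, $\kappa_R(\xi,\zeta)=+\infty$ for $(\xi,\zeta)\in\partial\Omega_R$, so a Cauchy--Schwarz in $(\xi,\zeta)$ over the strip produces $\infty$ times a small quantity. The paper's $I_4$ term circumvents this with an unbalanced H\"older inequality, splitting $|\omega|^{-1}=|\omega|^{-3/4}\cdot|\omega|^{-1/4}$ so that the kernel factor $\ints_{B_1}|\omega|^{-3/2}$ converges, and then exchanging the order of integration over $\partial\Omega_R$ and over the annulus $\Omega_{R+1}\setminus\Omega_{R-1}$ by Fubini, so that the leftover weight $|\omega|^{-1/2}$ lands on $|\vp|^2$, which (H3) controls. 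Some device of this kind is unavoidable; plain $L^2$ duality in the source variable does not close the argument.
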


\begin{proof}
We set $d(x,z)=\sqrt{[x]_h^2 + z^2}$ and notice that $\de \Omega_R$,
$R>0$, are the level sets of $d$. Let $(x,z) \in \de \Omega_R$ and
set $\rho=R^s$, for some $0<s<1$. We have:
\begin{equation*}
\begin{split}
\frac{\partial w_0}{\partial \nu}& (x,z)  - ikn(x) w_0(x,z) =
\ints_{\RR^2} [\nabla G_0 \cdot \nabla d(x,z) - i kn(x) G_0]
\vp(\xi,\zi) d\xi d\zi \\
& = \ints_{\Omega_\rho} [\nabla G_0 \cdot \nabla d(x,z-\zi) - i
kn(x) G_0] \vp d\xi d\zi \\
& \quad + \ints_{\Omega_\rho} \nabla G_0 \cdot \nabla [d(x,z) -
d(x,z-\zi)] \vp d\xi d\zi \\ & \ \ \quad + \ints_{\Omega_\rho^c
\setminus B_1(x,z)}  [\nabla G_0 \cdot \nabla d(x,z) - i kn(x)
G_0] \vp d\xi d\zi \\
& \ \ \quad \quad + \ints_{B_1(x,z)} \nabla G_0 \cdot \nabla d(x,z)
\vp d\xi
d\zi - \ints_{B_1(x,z)} i kn(x) G_0 \vp d\xi d\zi \\
& = I_1+I_2+I_3+I_4-I_5.
\end{split}
\end{equation*}
Since the quantity $\nabla G_0 \cdot \nabla d(x,z-\zi) - i kn(x)
G_0$ depends on $x,\xi$ and $z-\zi$, from Lemma \ref{lemma Grad
asintotica}, we have that $|\nabla G_0 \cdot \nabla d(x,z-\zi) - i
kn(x) G_0| = \OO(R^{-\frac{3}{2}})$ uniformly for $\xi$ and $\zi$
bounded, and thus $|I_1|=\OO (R^{-\frac{3}{2}})$ (notice that
\eqref{int vp} implies that $\vp \in L^1(\RR^2)$).

From (H1) we can assume that $\xi$ is bounded and, from Lemma
\ref{lemma Grad asintotica}, we infer that $|\nabla G_0|$ and $G_0$
are bounded in $\Omega_\rho$ for $R$ large enough. Thus, since
\begin{equation*}
|\nabla d(x,z) - \nabla d(x,z-\zi)| = \OO \Big( \frac{\zi}{R} \Big),
\end{equation*}
we have that $|I_2|$ is estimated (up to a multiplicative constant)
by
\begin{equation*}
\frac{1}{R} \ints_{\Omega_R} |\zi| |\vp(\xi,\zi)| d\xi d\zi.
\end{equation*}
Coarea formula (notice that $|\nabla d|=1$) and H\"{o}lder
inequality yield
\begin{equation*}
\ints_{\Omega_\rho} |\zi| |\vp (\xi,\zi)| d\xi d\zi \leq
\ints_0^{\rho} r \ints_{\de \Omega_r} |\vp| d\ell \ dr \leq
\sqrt{2(\pi+h)} \ints_0^{\rho} r^{\frac{3}{2}} \bigg( \ints_{\de
\Omega_r} |\vp|^2 d\ell \bigg)^{\frac{1}{2}} dr,
\end{equation*}
and thus, from \eqref{int vp}, we obtain that $I_2 = \OO
(R^{s(1-\delta) - 1})$.

From Lemma \ref{lemma singolarita} and since $|\nabla d|=1$, we
obtain that (up to a multiplicative constant) $|I_3|$ is bounded by
\begin{equation*}
\ints_{\Omega_\rho^c} |\vp (\xi,\zi)| d\xi d\zi,
\end{equation*}
and thus $I_3= \OO(R^{-s\delta})$.

In order to estimate $I_5$, we use H\"{o}lder inequality and notice
that $\|G_0\|_{L^2(B_1(x,z))}$ is bounded by a constant independent
on $\xi$ and $\zi$, as follows from \eqref{Green singularity}. Thus,
from coarea formula and \eqref{int vp}, we have
\begin{equation*}
\ints_{B_1(x,z)} |\vp(\xi,\zi)|^2 d\xi d\zi \leq \ints_{R-1}^{R+1}
\ints_{\de \Omega_r} |\vp|^2 d\ell \ dr \leq
2c_1(R-1)^{-(3+2\delta)},
\end{equation*}
which implies that $I_5= \OO (R^{-\frac{3}{2} - \delta})$.

By summing up the above estimates we find that
\begin{equation*}
|I_1+I_2+I_3+I_5| = \OO \left(\max
\{R^{-\frac{3}{2}},R^{s(1-\delta)-1},R^{-\delta s},R^{-\frac{3}{2} -
\delta}\}\right),
\end{equation*}
as $R\to +\infty$, and thus
\begin{equation*}
\ints_{\de \Omega_R} |I_1+I_2+I_3+I_5|^2 d\ell = \OO \left(\max
\{R^{-2},R^{2s(1-\delta)-1},R^{-2\delta
s+1},R^{-2(1+\delta)}\}\right),
\end{equation*}
as $R\to +\infty$. By choosing $0<\ep<1$ such that $\delta=\ep +
\frac{1}{2(1-\ep)}$ and setting $s=1-\ep$, we find that
\begin{equation*}
\lim_{R\to+\infty} \ints_{\de \Omega_R} |I_1+I_2+I_3+I_5|^2 d\ell
=0.
\end{equation*}

It remains to prove that
\begin{equation*}
\lim_{R\to+\infty} \ints_{\de \Omega_R} |I_4|^2 d\ell =0.
\end{equation*}
We set $\omega =(x-\xi,z-\zi)$. Working as in the proof of Lemma
\ref{lemma singolarita}, we prove that $|\nabla G_0 \cdot \omega |$
is bounded in $B_1(x,z)$ by a constant independent on $x,z,\xi,\zi$.
Thus, $|I_4|$ is estimated (up to a multiplicative constant) by
\begin{equation*}
 \ints_{B_1(x,z)} \frac{|\vp(\xi,\zi)|}{|\omega|}
d\xi d\zi,
\end{equation*}
where we set $p=(x-\xi,z-\zi)$. From H\"{o}lder inequality, we
estimate $|I_4|^2$ by
\begin{equation*}
\ints_{B_1(x,z)} \frac{d\xi d\zi}{|\omega|^{\frac{3}{2}}}
\ints_{B_1(x,z)} \frac{|\vp(\xi,\zi)|^2}{|\omega|^{\frac{1}{2}}}
d\xi d\zi = 4\pi \ints_{B_1(x,z)}
\frac{|\vp(\xi,\zi)|^2}{|\omega|^{\frac{1}{2}}} d\xi d\zi.
\end{equation*}
Fubini-Tonelli's Theorem yields
\begin{equation*}
\begin{split}
\ints_{\de \Omega_R} \ints_{B_1(x,z)}
\frac{|\vp(\xi,\zi)|^2}{|\omega|^{\frac{1}{2}}} d\xi d\zi\ dx dz  &
\leq \ints_{\de \Omega_R} \bigg( \ints_{\Omega_{R+1} \setminus
\Omega_{R-1}} \frac{|\vp(\xi,\zi)|^2}{|\omega|^{\frac{1}{2}}} d\xi
d\zi\ \bigg) dx dz
\\ &  = \ints_{\Omega_{R+1}
\setminus \Omega_{R-1}} |\vp(\xi,\zi)|^2 \bigg( \ints_{\de \Omega_R}
\frac{1}{|\omega|^{\frac{1}{2}}} dx dz \bigg)  d\xi d\zi.
\end{split}
\end{equation*}
Since
\begin{equation*}
\ints_{\de \Omega_R} \frac{1}{|\omega|^{\frac{1}{2}}} dx dz =
\OO(R),
\end{equation*}
and from \eqref{int vp}, we obtain that
\begin{equation*}
\ints_{\de \Omega_R} |I_4|^2 dx dz = \OO(R^{-2-2\delta}),
\end{equation*}
which completes the proof.
\end{proof}

\begin{lemma} \label{lemma eidus guidati}
Let $\vp$ be as in Lemma \ref{lemma eidus}. Then,
\begin{equation*}
w_l(x,z)= \ints_{\RR^2} G_l(x,z;\xi,\zi) \vp(\xi,\zi) d\xi d\zi,
\end{equation*}
$l=1,\ldots,M$, satisfies
\begin{equation*}
\lim_{R\to +\infty} \sqrt{R} \ints_{Q_R}
    \Big{|} \frac{\partial w_l}{\partial \nu} - i\beta_l w_l
    \Big{|}^2 d\ell = 0.
\end{equation*}
\end{lemma}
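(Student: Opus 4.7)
The plan is to exploit the tensor-product form of $G_l$ in \eqref{G l guid} and reduce the problem to a one-dimensional estimate for the $z$-dependent factor. Writing
$$w_l(x,z) = \frac{e(x,\gamma_l)}{2i\beta_l}\, F(z), \qquad F(z) := \int_{\RR^2} e^{i\beta_l|z-\zeta|}\, e(\xi,\gamma_l)\, \vp(\xi,\zeta)\, d\xi\, d\zeta,$$
the boundary analysis on $\partial Q_R$ (which I take to be the intended domain of integration in the statement, the notation $Q_R$ with surface element $d\ell$ being a typo) decouples: $\partial Q_R$ is the union of the two horizontal sides $z=\pm R$, $|x|\leq R$, and the two vertical sides $x=\pm R$, $|z|\leq R$.

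On the vertical sides $x=\pm R$, both $e(\pm R,\gamma_l)$ and $e'(\pm R,\gamma_l)$ decay exponentially in $R$, while (H1) and (H3) together yield $\vp\in L^1(\RR^2)$, so $F$ is uniformly bounded on $\RR$. Hence the surface integrals on $x=\pm R$ decay exponentially in $R$, easily absorbing the factor $\sqrt{R}$.

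On the horizontal side $z=R$ I would write $\partial_\nu w_l-i\beta_l w_l = \frac{e(x,\gamma_l)}{2i\beta_l}\bigl(F'(R)-i\beta_l F(R)\bigr)$. Splitting the kernel of $F$ at $\zeta=z$ and differentiating, the cancellation gives the identity
$$F'(R)-i\beta_l F(R) = -2i\beta_l \int_{\RR} e(\xi,\gamma_l)\int_R^{+\infty} e^{i\beta_l(\zeta-R)}\, \vp(\xi,\zeta)\, d\zeta\, d\xi.$$
Since $\beta_l\in\RR$ the exponential phase has unit modulus, and (H1) confines the $\xi$-integration to $[-x_0,x_0]$. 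A Cauchy-Schwarz in $\zeta$ against the dual weights $\zeta^{-1-2\delta},\,\zeta^{1+2\delta}$ followed by Cauchy-Schwarz in $\xi$ reduces the task to bounding
$$\int_{|\xi|\leq x_0}\int_R^\infty \zeta^{1+2\delta}\,|\vp(\xi,\zeta)|^2\, d\zeta\, d\xi.$$
The key point is that on this region the level function $d(\xi,\zeta):=\sqrt{[\xi]_h^2+\zeta^2}$ satisfies $d(\xi,\zeta)\geq\zeta>R$, so by coarea (with $|\nabla d|=1$) and (H3),
$$\int_{|\xi|\leq x_0}\int_R^\infty \zeta^{1+2\delta}|\vp|^2\,d\zeta\,d\xi \leq \int_R^\infty r^{1+2\delta}\int_{\partial\Omega_r}|\vp|^2\,d\ell\,dr \leq c_1\int_R^\infty r^{-2}\,dr = c_1 R^{-1}.$$
Combining the estimates, $|F'(R)-i\beta_l F(R)|\leq CR^{-\delta-1/2}$, and since $\|e(\cdot,\gamma_l)\|_{L^2(\RR)}=1$ the $z=R$ piece of the surface integral is at most $CR^{-2\delta-1}$; multiplying by $\sqrt{R}$ gives $O(R^{-2\delta-1/2})\to 0$ (since $\delta>1/2$). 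The side $z=-R$ is symmetric via the parallel identity for $F'(-R)+i\beta_l F(-R)$.

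The main obstacle is precisely the coupled Cauchy-Schwarz and coarea estimate in the previous paragraph: the weight $\zeta^{1+2\delta}$ is inserted so that after the elementary inequality $\zeta\leq d(\xi,\zeta)$ pushes the integrand onto the level set $\partial\Omega_r$, the decay in (H3) matches and yields an integrable tail $r^{-2}$. Everything else — the splitting formula for $F'\pm i\beta_l F$, the $L^1$ boundedness of $F$, and the exponential decay of $e(\cdot,\gamma_l)$ — is routine.
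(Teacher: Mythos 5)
Your proposal is correct and follows essentially the same route as the paper: exponential decay of $e(\cdot,\gamma_l)$ disposes of the sides $|x|=R$, and on the sides $|z|=R$ the contribution of $|\zeta|<R$ cancels exactly, leaving a tail over $|\zeta|\geq R$ that is controlled by (H3) via the coarea formula. Your weighted Cauchy--Schwarz with $\zeta^{\pm(1+2\delta)}$ gives a slightly sharper tail bound ($R^{-\delta-1/2}$ instead of the paper's $R^{-\delta}$), but this is only a cosmetic difference since both rates suffice.
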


\begin{proof}
Let $(x,z)\in \de Q_R$, with $|x|=R$. Thus, $\frac{\de}{\de \nu} =
\frac{\de}{\de |x|}$ and it is easy to show that
\begin{equation*}
\begin{split}
\Big{|} \frac{\partial w_l}{\partial \nu} - i\beta_l w_l \Big{|} & =
K_l |e(x,\gamma_l)| \ints_{\RR^2} |e(\xi,\gamma_l) \vp(\xi,\zi)|
d\xi d\zi \\ & \leq K_l |e(x,\gamma_l)|
\|e(\cdot,\gamma_l)\|_{L^\infty(\RR)} \|\vp\|_{L^1(\RR^2)},
\end{split}
\end{equation*}
with $K_l,\: l=1,\ldots,N,$ positive constants; since
$|e(x,\gamma_l)|$ vanishes exponentially as $|x|\to +\infty$, we
obtain that
\begin{equation*}
\lim_{R\to +\infty} \sqrt{R} \ints_{Q_R\cap \{|x|=R\}} \Big{|}
\frac{\partial w_l}{\partial \nu} - i\beta_l w_l \Big{|}^2 d\ell =
0.
\end{equation*}

Now, we consider $(x,z)\in \de Q_R$, with $|z|=R$ (thus
$\frac{\de}{\de \nu} = \frac{\de}{\de |z|}$). We write
\begin{equation*}
\frac{\partial w_l}{\partial \nu} - i\beta_l w_l = \ints_{\{|\zi|<
R\}} + \ints_{\{|\zi|\geq R\}} \left[ \frac{\de
G_l(x,z;\xi,\zi)}{\de |z|} - i \beta_l G_l(x,z;\xi,\zi) \right]
\vp(\xi,\zi) d\xi d\zi.
\end{equation*}
From \eqref{G l guid} it follows that the first integral on the
right hand side vanishes, since there $|z|>|\zi|$. We estimate the
second integral as follows:
\begin{equation*}
\ints_{\{|\zi|\geq R\}} \bigg{|} \frac{\de G_l}{\de |z|} - i \beta_l
G_l \bigg{|} |\vp| d\xi d\zi \leq
\|e(\cdot,\gamma_l)\|_{L^\infty(\RR)} |e(x,\gamma_l)| \ints_{\RR^2
\setminus \Omega_R} |\vp(\xi,\zi)| d\xi d\zi.
\end{equation*}
Since $\vp$ satisfies (H1), coarea formula and H\"{o}lder inequality
yield
\begin{equation*}
\ints_{\RR^2 \setminus \Omega_R} |\vp(\xi,\zi)| d\xi d\zi =
\ints_R^{+\infty} \ints_{\de \Omega_r} |\vp| d\ell dr \leq
\sqrt{2\pi} \ints_R^{+\infty} \sqrt{r} \left( \ints_{\de \Omega_r}
|\vp|^2 d\ell \right)^{\frac{1}{2}} dr;
\end{equation*}
from \eqref{int vp} we obtain that
\begin{equation*}
\ints_R^{+\infty} \sqrt{r} \left(\  \ints_{\de \Omega_r} |\vp|^2
d\ell \right)^{\frac{1}{2}} dr \leq \frac{\sqrt{c_1}}{\delta}
R^{-\delta},
\end{equation*}
and then
\begin{equation*}
\bigg{|} \frac{\partial w_l}{\partial \nu} - i\beta_l w_l \bigg{|}
\leq \frac{\sqrt{2\pi c_1}}{\delta} \,
\|e(\cdot,\gamma_l)\|_{L^\infty(\RR)} |e(x,\gamma_l)|
 R^{-\delta}.
\end{equation*}
Since $\|e(\cdot,\gamma_l)\|_2 = 1$, we have that
\begin{equation*}
\begin{split}
\ints_{Q_R\cap \{|z|=R\}} \Big{|} \frac{\partial w_l}{\partial \nu}
- i\beta_l w_l \Big{|}^2 d\ell & \leq \frac{2\pi c_1}{\delta^2}
\|e(\cdot,\gamma_l)\|_{L^\infty(\RR)} R^{-2\delta} \ints_{-R}^R
|e(x,\gamma_l)|^2 dx \\
& \leq \frac{2\pi c_1}{\delta^2}
\|e(\cdot,\gamma_l)\|_{L^\infty(\RR)} R^{-2\delta};
\end{split}
\end{equation*}
from the above estimate and since $\delta> \frac{1}{2}$, we obtain
that
\begin{equation*}
\lim_{R\to +\infty} \sqrt{R} \ints_{Q_R\cap \{|z|=R\}} \Big{|}
\frac{\partial w_l}{\partial \nu} - i\beta_l w_l \Big{|}^2 d\ell =
0,
\end{equation*}
which completes the proof.
\end{proof}

\begin{proofTeo2} Firstly
we prove that $u$ is bounded and then show that it satisfies
\eqref{rad cond}.

We notice that, if we prove that $\ints_{\RR^2} Gf$ is bounded, then
we conclude that $u$ is bounded, as follows from \eqref{p
condizioni} and a contraction mapping theorem. In order to prove
that, we write
\begin{equation}\label{int in dim esistenza}
\ints_{\RR^2} G(x,z;\xi,\zi) f(\xi,\zi) d\xi d\zi = \ints_{B_1(x,z)}
+ \ints_{\RR^2\setminus B_1(x,z)} G(x,z;\xi,\zi) f(\xi,\zi) d\xi
d\zi.
\end{equation}
H\"{o}lder inequality and \eqref{Green singularity} imply that the
first integral on the right hand side is bounded by the $L^2$ norm
of $f$ multiplied by a constant independent on $(x,z)$. We notice
that the assumptions on $f$ imply that $f \in \L^1(\RR^2)$; since
$G$ is bounded outside $B_1(x,z)$ (as follows from Lemmas \ref{lemma
Grad asintotica} and \ref{lemma singolarita}), we obtain the
boundness of the second integral on the right hand side in
\eqref{int in dim esistenza} and conclude that $u$ is bounded.

It remains to prove that $u$ satisfies the radiation condition
\eqref{rad cond}. We write \eqref{eq integrale u} as
\begin{equation*}
u(x,z) = \sum_{l=0}^M \ints_{\RR^2} G_l(x,z;\xi,\zi)
[f(\xi,\zi)-p(\xi,\zi)u(\xi,\zi)] d\xi d\zi;
\end{equation*}
since $u$ is bounded, the conclusion follows straightforwardly from
the assumptions on $f$ and $p$ by using Lemmas \ref{lemma eidus} and
\ref{lemma eidus guidati}.
\end{proofTeo2}

\begin{remark}
In Theorem \ref{teo2}, we assumed that $f$ and $p$ satisfy (H1). Such an assumption is
due to the fact that, for proving Lemma \ref{lemma eidus}, we need an uniform asymptotic expansion of the
far-field of $G_0$, that we indeed have only if we assume that $\xi$ is bounded. We notice that, in Lemma
\ref{lemma eidus guidati} the assumption can be omitted, as it is
clear from its proof.
\end{remark}

\end{document}